\documentclass[11pt]{article}
\usepackage{amsmath,amsfonts,amssymb,amsthm,fancyhdr,bm,mathtools,enumitem}
\usepackage[hidelinks]{hyperref}
\usepackage{fullpage}
\usepackage[dvipsnames]{xcolor}
\usepackage[capitalize]{cleveref}
\setlength{\marginparwidth}{2cm}
\usepackage[color=Purple!50!white]{todonotes}
\usepackage[numbers, square,comma,sort&compress]{natbib}

\usepackage[affil-it]{authblk}
\linespread{1.3}

\setlength\headheight{15pt}
\setlength\headsep{10pt}

\newtheorem{thm}{Theorem}[section]
\newtheorem*{thm*}{Theorem}

\newtheorem{claim}[thm]{Claim}

\newtheorem{conj}[thm]{Conjecture}

\theoremstyle{definition}

\crefname{equation}{equation}{equations}
\crefname{lem}{Lemma}{Lemmas}
\crefname{thm}{Theorem}{Theorems}

\newlist{lemenum}{enumerate}{1}
\setlist[lemenum]{label=(\alph*), ref=\thelem(\alph*)}
\crefalias{lemenumi}{lemma}

\newcommand{\flo}[1]{\lfloor #1 \rfloor}

\newcommand{\rc}{{\hat{r}}_c}

\title{Connected size Ramsey numbers of matchings versus \\a small path or cycle}
\author{Sha Wang$^{1,2}$,\, Ruyu Song$^{1,2}$,\, Yixin Zhang$^{1,2}$,\, Yanbo Zhang$^{1,2,}$\thanks{Corresponding author: {\tt ybzhang@hebtu.edu.cn}. Research supported by NSFC (No.\ 11601527 and 11971011).}}

 \affil{ { \small {$^1$School of Mathematical Sciences, Hebei Normal University, Shijiazhuang 050024, P.R.~China}}\\
	     { \small {$^2$Hebei International Joint Research Center for Mathematics and Interdisciplinary Science,\\ Shijiazhuang 050024, P.R.~China}}
}

\date{}

\begin{document}
\maketitle
\begin{abstract}
	Given two graphs $G_1, G_2$, the connected size Ramsey number $\rc(G_1,G_2)$ is defined to be the minimum number of edges of a connected graph $G$, such that for any red-blue edge colouring of $G$, there is either a red copy of $G_1$ or a blue copy of $G_2$. Concentrating on $\rc(nK_2,G_2)$ where $nK_2$ is a matching, we generalise and improve two previous results as follows. Vito, Nabila, Safitri, and Silaban obtained the exact values of $\rc(nK_2,P_3)$ for $n=2,3,4$. We determine its exact values for all positive integers $n$. Rahadjeng, Baskoro, and Assiyatun proved that $\rc(nK_2,C_4)\le 5n-1$ for $n\ge 4$. We improve the upper bound from $5n-1$ to $\lfloor (9n-1)/2 \rfloor$. In addition, we show a result which has the same flavour and has exact values: $\rc(nK_2,C_3)=4n-1$ for all positive integers $n$.
\end{abstract}

\section{Introduction}
Graph Ramsey theory is currently among the most active areas in combinatorics. Two of the main parameters in the theory are Ramsey number and size Ramsey number, which are defined as follows. Given two graphs $G_1$ and $G_2$, we write $G\to (G_1,G_2)$ if for any edge colouring of $G$ such that each edge is coloured either red or blue, the graph $G$ always contains either a red copy of $G_1$ or a blue copy of $G_2$. The \emph{Ramsey number} $r(G_1,G_2)$ is the smallest possible number of vertices in a graph $G$ satisfying $G\to (G_1,G_2)$. The \emph{size Ramsey number} $\hat{r}(G_1,G_2)$ is the smallest possible number of edges in a graph $G$ satisfying $G\to (G_1,G_2)$. That is to say, $r(G_1,G_2)=\min\{|V(G)|:G\to (G_1,G_2)\}$, and $\hat{r}(G_1,G_2)=\min\{|E(G)|:G\to (G_1,G_2)\}$.

The size Ramsey number was introduced by Erd\H os, Faudree, Rousseau, and Schelp \cite{erdos1978size} in 1978. Some variants have also been studied since then. In 2015, Rahadjeng, Baskoro, and Assiyatun \cite{rahadjeng2015connected} initiated the study of such a variant called connected size Ramsey number by adding the condition that $G$ is connected. Formally speaking, the \emph{connected size Ramsey number} $\rc(G_1,G_2)$ is the smallest possible number of edges in a connected graph $G$ satisfying $G\to (G_1,G_2)$. It is easy to see that $\hat{r}(G_1,G_2)\le \rc(G_1,G_2)$, and equality holds when both $G_1$ and $G_2$ are connected graphs. But the latter parameter seems more tricky when $G_1$ or $G_2$ is disconnected. The previous results are mainly concerned with the connected size Ramsey numbers of a matching versus a sparse graph such as a path, a star, and a cycle.

Let $nK_2$ be a matching with $n$ edges, and $P_m$ a path with $m$ vertices. Vito, Nabila, Safitri, and Silaban \cite{vito2021size} gave an upper bound of $\rc(nK_2,P_m)$, and the exact values of $\rc(nK_2,P_3)$ for $n=2,3,4$.

\begin{thm}\cite{vito2021size}
	\label{thm:pm}
	For $n\ge 1$, $m\ge 3$, $\rc(nK_2,P_m)\le \begin{cases}n(m+2)/2-1, & \text{if}\ n\ \text{is even}; \\ (n+1)(m+2)/2-3, & \text{if}\ n\ \text{is odd}. \end{cases}$\\ Equality holds for $m=3$ and $1\le n\le 4$.
\end{thm}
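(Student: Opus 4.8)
The plan is to prove the upper bound by an explicit construction and then establish the equality cases for $m=3$ separately. The basic building block is the cycle $C_{m+1}$, which I claim satisfies $C_{m+1}\to(2K_2,P_m)$. Indeed, fix a red-blue colouring of $C_{m+1}$ and suppose the red edges contain no two disjoint edges, so that they pairwise intersect. Since $C_{m+1}$ has maximum degree $2$ and (as $m\ge 3$) contains no triangle, a pairwise-intersecting family of its edges has at most two edges, and if two they are adjacent. Deleting at most two adjacent edges from $C_{m+1}$ leaves a subgraph containing a path on $m$ vertices, i.e.\ a blue $P_m$. Hence every colouring of $C_{m+1}$ yields a red $2K_2$ or a blue $P_m$. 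The second building block is simply the path $P_m$ itself: any colouring of it gives either a blue $P_m$ (all edges blue) or at least one red edge.

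To assemble these for $nK_2$, write $n=2k$ or $n=2k+1$. For even $n$ I would take $k$ vertex-disjoint copies of $C_{m+1}$; for odd $n$, $k$ copies of $C_{m+1}$ together with one copy of $P_m$. In either case I join the components into a single connected graph by adding one bridge between consecutive components, i.e.\ $k-1$ bridges when $n$ is even and $k$ bridges when $n$ is odd. Counting edges gives $k(m+1)+(k-1)=n(m+2)/2-1$ in the even case and $k(m+1)+(m-1)+k=(n+1)(m+2)/2-3$ in the odd case, matching the stated bounds. For the arrow property, fix any colouring of the assembled graph $G$. If some copy already contains a blue $P_m$ we are done; otherwise, by the two block claims each copy of $C_{m+1}$ carries a red $2K_2$ and the copy of $P_m$ (in the odd case) carries a red edge. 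As the copies are vertex-disjoint, these red matchings are disjoint and their union is a red matching of size $2k$ or $2k+1=n$. The bridges are simply ignored; since they only add edges they never obstruct either outcome. This proves $G\to(nK_2,P_m)$ and hence the upper bound.

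For the equality cases $m=3$, $1\le n\le 4$, I would use the reformulation that avoiding a blue $P_3$ is exactly the condition that the blue edges form a matching: a connected graph $G$ satisfies $G\to(nK_2,P_3)$ if and only if $\nu(G-M)\ge n$ for \emph{every} matching $M\subseteq E(G)$, where $\nu$ is the matching number. The cases $n=1,2$ are immediate, since one checks by hand that no connected graph with fewer than $2$, respectively $4$, edges can meet this condition, while $K_2$ and $C_4$ show the bounds are attained. The substantive work is $n=3$ (bound $7$) and $n=4$ (bound $9$): assuming a connected $G$ with $\nu(G-M)\ge n$ for all matchings $M$ but fewer than the claimed number of edges, I would bound the maximum degree and the number of vertices covered by a maximum matching, then eliminate the finitely many remaining configurations by exhibiting, in each, a ``spoiling'' matching $M$ (the blue edges) with $\nu(G-M)<n$.

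I expect this last structural case analysis to be the main obstacle: unlike the upper bound, where the block decomposition makes the argument routine, the lower bound has no evident single invariant forcing the precise constant $5n/2-O(1)$, so ruling out all sub-threshold connected graphs seems to require a careful, somewhat ad hoc enumeration. (This is presumably also why the original equality statement is restricted to $n\le 4$, and why obtaining the exact value for all $n$ — the contribution improved in the present paper — calls for a different argument.)
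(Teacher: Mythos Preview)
This theorem is quoted from \cite{vito2021size} and is not proved in the present paper, so there is no in-paper proof to compare against directly. Your upper-bound construction is correct, and its $m=3$ specialisation (copies of $C_4$, plus a $P_3$ when $n$ is odd, linked by bridges) is precisely the construction the paper uses in Section~\ref{section2} to obtain the upper bound of \cref{thm:P3}; your verification that $C_{m+1}\to(2K_2,P_m)$ and the edge counts are both fine.

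For the equality cases $m=3$, $1\le n\le 4$, you only outline a plan based on the reformulation $G\to(nK_2,P_3)\iff \nu(G-M)\ge n$ for every matching $M$, followed by ad hoc elimination. The paper does not reprove these cases either; instead it supersedes them by establishing $\rc(nK_2,P_3)=\lfloor(5n-1)/2\rfloor$ for \emph{all} $n$. The method there is not a finite enumeration but an induction on $n$ organised around ``deletable edge sets'' and a structural analysis of end blocks and end-cut vertices, which is what lets the lower bound go through uniformly. So your diagnosis in the last paragraph---that the small-$n$ equalities invite case-checking while the general result needs a genuinely different argument---is accurate.
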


If $m$ is much larger than $n$, this upper bound cannot be tight. Because Erd\H os and Faudree \cite{erdos1984size} constructed a connected graph which implies $\rc(nK_2,P_m)\le m+c\sqrt{m}$, where $c$ is a constant depending on $n$. But for small $m$, the above upper bound can be tight. Our first result determines the exact values of $\rc(nK_2,P_3)$ for all positive integers $n$, which generalises the equality of \cref{thm:pm}.
\begin{thm}
	\label{thm:P3}
	For all positive integers $n$, we have $\rc(nK_2,P_3)=\flo{(5n-1)/2}$.
\end{thm}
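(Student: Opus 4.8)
The plan is to translate the arrow relation into a purely matching-theoretic statement and then prove matching upper and lower bounds in that language. A blue copy of $P_3$ is exactly two blue edges meeting at a vertex, so a colouring avoids a blue $P_3$ precisely when the blue edges form a matching. Hence $G\to(nK_2,P_3)$ holds if and only if for every matching $B\subseteq E(G)$ the red graph $G-B$ has a matching of size $n$; writing $\nu$ for the matching number, this says
$$\min_{B\ \text{a matching}}\nu(G-B)\ \ge\ n.$$
I will call such a $G$ \emph{$n$-robust}, so that $\rc(nK_2,P_3)$ is the minimum number of edges of a connected $n$-robust graph. Taking $B=\emptyset$ already forces $\nu(G)\ge n$, hence $\ab{V(G)}\ge 2n$; the whole difficulty is the extra $\approx n/2$ edges forced by robustness.

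For the upper bound I would give an explicit connected $n$-robust graph with $\flo{(5n-1)/2}$ edges. Let $k=\flo{n/2}$. Take $k$ vertex-disjoint copies of $C_4$, and when $n$ is odd add one copy of $P_3$; then string these $k$ (resp.\ $k+1$) blocks together in a line by single bridge edges to form a connected $G$. This gives $\ab{E(G)}=4k+(k-1)=5k-1$ when $n$ is even and $\ab{E(G)}=4k+2+k=5k+2$ when $n$ is odd, which is $\flo{(5n-1)/2}$ in both cases. For robustness, fix any blue matching $B$: inside each $C_4$ the set $B$ is a matching, and a short finite check shows that $C_4$ minus any matching still has $\nu=2$, and indeed a red $2$-matching survives even after forbidding the (at most two) cycle-vertices that an incident blue bridge uses. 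Since the two endpoints of each bridge lie in \emph{different} blocks, these red $2$-matchings are vertex-disjoint across the $C_4$'s; and the attached $P_3$ always keeps a red edge (its two edges share a vertex, so at most one is blue), which is disjoint from everything because $P_3$ is glued on by a bridge at an endpoint rather than by identifying a vertex. Summing over blocks yields a red matching of size $2k$ or $2k+1=n$. (This recovers and extends the equality cases $n\le 4$ of \cref{thm:pm}, which serve as base cases.)

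For the lower bound, let $G$ be connected and $n$-robust. Colour a maximum matching $M$ blue, so $\ab M=\nu(G)\ge n$; robustness yields a matching $M'\subseteq E(G)\setminus M$ with $\ab{M'}\ge n$. The edge-disjoint union $H=M\cup M'$ is a disjoint union of paths and even cycles; say it has $p$ paths and $q$ cycles. Counting vertices, $\ab{V(H)}=\ab{E(H)}+p=\ab M+\ab{M'}+p\ge 2n+p$, so $\ab{V(G)}\ge 2n+p$, while the $q$ cycles of $H$ are edge-disjoint cycles of $G$, so its cyclomatic number $c(G)=\ab{E(G)}-\ab{V(G)}+1$ satisfies $c(G)\ge q$. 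Since $G$ is connected,
$$\ab{E(G)}=\ab{V(G)}-1+c(G)\ \ge\ 2n+p+q-1.$$
Thus everything reduces to the combinatorial heart of the theorem: for a suitable choice of $M,M'$,
$$p+q\ \ge\ \cei{n/2},$$
after which $\ab{E(G)}\ge 2n+\cei{n/2}-1=\flo{(5n-1)/2}$, matching the construction exactly (the lone extra vertex of the $P_3$ is what supplies the parity in the odd case).

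I expect the component bound $p+q\ge\cei{n/2}$ to be the main obstacle, and it is exactly where the \emph{full} strength of robustness must be used: a single pair $M,M'$ need not split into many components — e.g.\ the two perfect matchings of $C_{2n}$ union to one cycle — but $C_{2n}$ is not $n$-robust for $n\ge 3$ (bluing a matching that cuts it into odd arcs drops the red matching below $n$), so such configurations are forbidden. I would try to prove it in one of two ways. First, by induction on $n$: locate in any connected $n$-robust graph a bounded reducible block (a $4$-cycle-like $2$-connected piece, or a configuration near a leaf/degree-$2$ vertex), delete it to obtain a connected $(n-2)$- or $(n-1)$-robust graph, and charge the $5$ (resp.\ $\cei{5/2}$) deleted edges against the $2$ (resp.\ $1$) units of matching removed. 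Alternatively, in contrapositive form via Tutte–Berge: if $\ab{E(G)}<\flo{(5n-1)/2}$ then $G$ is so sparse that most of its edges are bridges, and I would choose a blue matching of bridges whose deletion leaves at least $\ab{V(G)}-2n+2$ odd red components, forcing $\nu(G-B)\le n-1$ and contradicting robustness. The two delicate points I anticipate are (a) controlling the at most $c(G)$ cycles, each of which locally resists fragmentation and must be paid for, and (b) the parity bookkeeping separating $5k-1$ from $5k+2$, i.e.\ ensuring that exactly one unavoidable deficiency vertex appears precisely when $n$ is odd.
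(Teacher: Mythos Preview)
Your upper-bound construction and its verification are correct and coincide with the paper's. (The parenthetical about ``forbidding the cycle-vertices that an incident blue bridge uses'' is unnecessary: the red $2$-matchings in distinct $C_4$'s, and the red edge in the $P_3$, are automatically vertex-disjoint because the blocks themselves are vertex-disjoint; bridges play no role.)

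The lower bound, however, is not proved. Your reduction to the inequality $p+q\ge\lceil n/2\rceil$ is valid, but you then stop and offer only two sketches, neither of which is close to an argument. The Tutte--Berge route rests on the claim that a connected graph with fewer than $\flo{(5n-1)/2}$ edges has ``most of its edges bridges''; nothing prevents such a graph from being $2$-connected, so the plan of colouring a matching of bridges blue to shatter $G-B$ into many odd components has no starting point. The inductive route (locate a bounded reducible block, delete it, drop $n$ by $1$ or $2$) is the right instinct, but all the content lies in \emph{finding} that configuration and checking that its removal leaves a connected graph whose robustness parameter drops by the right amount; none of this is done. Note also that your framing fixes $M$ to be a maximum matching and $M'$ to be \emph{some} matching of size $n$ in $G-M$, so the freedom you allude to with ``for a suitable choice of $M,M'$'' needs to be made precise before $p+q\ge\lceil n/2\rceil$ even becomes a well-posed target.

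For comparison, the paper never passes through your $p+q$ inequality. It runs a direct induction on $n$ on the statement ``every connected $G$ with at most $\flo{(5n-3)/2}$ edges has an $(nK_2,P_3)$-colouring'', introduces the notion of a \emph{deletable} edge set (a star together with a pendant matching whose removal keeps the edge set connected), shows every deletable set has size at most two, deduces that every non-cut vertex has degree at most two so that end blocks are $K_2$'s or cycles, and then carries out a careful end-cut case analysis to produce a deletable set of size $\ge 3$ and reach a contradiction. Your first sketch is pointing in this direction, but the structural analysis that actually carries the proof remains to be supplied.
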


Rahadjeng, Baskoro, and Assiyatun \cite{rahadjeng2017connected} proved that $\rc(nK_2,C_4)\le 5n-1$ for $n\ge 4$. This upper bound can be improved from $5n-1$ to $\lfloor (9n-1)/2 \rfloor$. 

\begin{thm}
	\label{thm:C4}
	For all positive integers $n$, we have $\rc(nK_2,C_4)\le \lfloor (9n-1)/2 \rfloor$.
\end{thm}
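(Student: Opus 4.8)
The plan is to exhibit, for every $n$, a connected graph $G$ on at most $\lfloor(9n-1)/2\rfloor$ edges with $G\to(nK_2,C_4)$. The strategy is modular: first I would find a cheap \emph{forcing gadget}, namely a small connected graph $H$ with $H\to(2K_2,C_4)$, and then chain vertex-disjoint copies of it (plus a single $C_4$ when $n$ is odd) using cut edges. The point is that a cut edge lies in no $C_4$, so any $C_4$ of $G$ sits inside one gadget; hence a colouring with no blue $C_4$ restricts to a blue-$C_4$-free colouring of each gadget, forcing the local red matching, and vertex-disjointness of the gadgets makes these local matchings combine into a global red $nK_2$.

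For the gadget I would propose $H=K_{3,3}-e$ (bipartite with parts $\{a,b,c\},\{x,y,z\}$ and the single edge $cz$ deleted), which has $8$ edges and is connected. To see $H\to(2K_2,C_4)$, suppose a red-blue colouring has no red $2K_2$; then the red subgraph has matching number at most $1$, so it is a star or a triangle. Since $H$ is bipartite it has no triangle, so red is a star centred at some vertex $u$ (the empty and single-edge cases are degenerate stars). Consequently every edge of $H-u$ is blue. The key verification is then purely combinatorial: for \emph{each} of the six vertices $u$, the graph $H-u$ still contains a $C_4$ (e.g.\ deleting $c$ leaves $K_{2,3}$, while deleting $a$ leaves the $4$-cycle $b\,x\,c\,y$, and so on). Thus $H-u$ supplies a blue $C_4$, a contradiction, so $H\to(2K_2,C_4)$.

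With the gadget in hand the assembly is bookkeeping. For even $n=2m$, take $m$ vertex-disjoint copies of $H$ and join them into a path with $m-1$ bridges; the edge count is $8m+(m-1)=9m-1=\lfloor(9n-1)/2\rfloor$. A colouring of this $G$ with no blue $C_4$ yields a red $2K_2$ in each copy (the bridges, being cut edges, create no blue $C_4$ and are simply ignored), and disjointness gives a red $2m$-matching $=nK_2$. For odd $n=2m+1$, attach in addition one copy of $C_4$ via a further bridge; since $C_4\to(K_2,C_4)$, this forces one more red edge disjoint from the rest, and the total becomes $8m+4+m=9m+4=\lfloor(9n-1)/2\rfloor$ (for $m=0$ this degenerates to the single $C_4$ giving $\rc(K_2,C_4)\le4$). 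In all cases $G$ is connected and $G\to(nK_2,C_4)$.

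The \textbf{main obstacle} is locating the $8$-edge gadget rather than settling for the naive ``two disjoint $C_4$'s plus a bridge'' ($9$ edges), since it is exactly this saving of one edge per two matching edges that upgrades $5n-1$ to $\lfloor(9n-1)/2\rfloor$. The subtlety behind $K_{3,3}-e$ is that one must simultaneously block the \emph{red-star} escape at every vertex (which is what the ``$H-u$ contains a $C_4$ for all $u$'' property buys) and the \emph{red-triangle} escape (which is automatically killed by keeping $H$ bipartite); a non-bipartite $8$-edge candidate typically reintroduces a triangle whose complement contains no $C_4$. Beyond this, the remaining work is routine: confirming the six vertex-deletions of $K_{3,3}-e$, checking that bridges never participate in a $C_4$, and the parity arithmetic matching $9m-1$ and $9m+4$ to $\lfloor(9n-1)/2\rfloor$.
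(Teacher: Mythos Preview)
Your proposal is correct and follows essentially the same approach as the paper: the paper also uses $K_{3,3}-e$ as the $(2K_2,C_4)$-gadget, takes $\frac{n}{2}$ copies (plus an extra $C_4$ when $n$ is odd), and connects the components with bridges to reach the edge count $\lfloor(9n-1)/2\rfloor$. If anything, your write-up is more detailed than the paper's, since you actually verify $K_{3,3}-e\to(2K_2,C_4)$ via the red-star/bipartite argument and spell out why bridges cannot lie in any $C_4$, whereas the paper simply asserts the gadget property as ``easy to check''.
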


Now we prove the theorem by constructing a graph with $\lfloor (9n-1)/2 \rfloor$ edges. Let $K_{3,3}-e$ be the graph $K_{3,3}$ with one edge deleted. It is easy to check that $K_{3,3}-e\to (2K_2,C_4)$. We use $nG$ to denote $n$ disjoint copies of $G$. If $n$ is even, then $\frac{n}{2}(K_{3,3}-e)\to (nK_2,C_4)$. The graph $\frac{n}{2}(K_{3,3}-e)$ has $n/2$ components and can be connected by adding $n/2-1$ edges. If $n$ is odd, then $\frac{n-1}{2}(K_{3,3}-e)\cup C_4\to (nK_2,C_4)$. The graph $\frac{n-1}{2}(K_{3,3}-e)\cup C_4$ has $(n+1)/2$ components and can be connected by adding $(n-1)/2$ edges. In both cases, we obtain a connected graph with $\lfloor (9n-1)/2 \rfloor$ edges and hence the upper bound follows.

It seems likely that the determination of $\rc(nK_2,C_4)$ for all $n$ is tricky. We believe the upper bound is tight and pose the following conjecture.

\begin{conj}
	\label{conj:C4}
	For all positive integers $n$, $\rc(nK_2,C_4)=\lfloor (9n-1)/2 \rfloor$.
\end{conj}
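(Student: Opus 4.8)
The upper bound is already supplied by \cref{thm:C4}, so the task reduces to proving the matching lower bound: every connected graph $G$ with $G\to(nK_2,C_4)$ satisfies $\ab{E(G)}\ge\flo{(9n-1)/2}$. The plan is to first reformulate the arrowing relation in purely combinatorial terms. Complementing the colours, a blue subgraph is $C_4$-free precisely when the red subgraph $R$ meets every $4$-cycle of $G$, and the colouring avoids a red $nK_2$ exactly when $\nu(R)\le n-1$, where $\nu$ denotes the matching number. Hence $G\to(nK_2,C_4)$ if and only if every edge set $R\subseteq E(G)$ that meets all $4$-cycles of $G$ has $\nu(R)\ge n$. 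Writing $f(G)$ for the minimum of $\nu(R)$ over all such $C_4$-transversals $R$, the conjecture becomes the statement that, over connected graphs with $f(G)\ge n$, the minimum number of edges is $\flo{(9n-1)/2}$.

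Second, I would set up an induction on $n$ in steps of two. For the base cases $n=1$ and $n=2$ one argues directly: when $n=1$ any avoiding colouring must use only blue, so $G$ must contain a $C_4$, forcing $\ab{E(G)}\ge 4=\flo{(9\cdot1-1)/2}$; the case $n=2$ is a finite structural check showing that no connected graph on $7$ edges can force $f\ge2$, the unique minimiser being $K_{3,3}-e$. For the inductive step, starting from a connected witness $G$ with $f(G)\ge n$, I would locate a substructure $H$ carrying two units of the forced matching --- the target being a copy of $K_{3,3}-e$ or an equally $C_4$-robust configuration of local edge-cost at least $8$ --- delete it, reconnect the remainder with a single bridge, and argue that the resulting connected graph $G'$ satisfies $f(G')\ge n-2$. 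Summing the costs over the recursion yields $8\flo{n/2}$ edges inside the $K_{3,3}-e$ blocks, plus one $C_4$ of $4$ edges when $n$ is odd, together with $\cei{n/2}-1$ bridges, which rearranges to exactly $\flo{(9n-1)/2}$ and explains both the floor and the additive constant.

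The accounting of connectivity is the source of the extra half-edge per unit: the $\cei{n/2}$ essentially edge-disjoint blocks, each of cost at least $8$ (resp.\ $4$ for the odd leftover), must be tied together by at least $\cei{n/2}-1$ bridging edges, and these bridges raise the amortised cost from $4$ to $9/2$ per unit of $f$. I would phrase this globally as a charging scheme: assign a charge of $9/2$ to each of the $n$ forced matching units and pay it out of incident edges, so that every edge of $G$ absorbs a total charge of at most $1$ while the bridges supply the residual half-charges.

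The hard part is that $f(G)$ is a \emph{global} minimum over all $C_4$-transversals, and such a transversal need not respect any decomposition of $G$ into blocks. Consequently the inductive excision is delicate: after deleting $H$ the cheapest transversal of $G'$ may exploit edges that previously lay across the cut, so one must show that the two units of matching attributed to $H$ cannot be shared with the rest. Equivalently, one must rule out any gadget cheaper than $K_{3,3}-e$ that forces two units while spending fewer than $8$ edges, and control how transversals interact across the bridges. I expect the uniform charging argument to be the most promising route to both difficulties at once, with the local validity of the discharge --- proving that no edge is over-charged near a bridge --- being the crux that must be settled by a careful case analysis of the possible local structures.
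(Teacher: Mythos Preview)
The statement you are addressing is \cref{conj:C4}, which the paper poses as an \emph{open conjecture}; the authors say explicitly that its resolution ``seems out of our reach'' and give no proof. There is therefore nothing in the paper to compare your attempt against, and your submission is a research plan rather than a proof --- as you yourself acknowledge when you call the crux ``unsettled.''

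On the substance of the plan: the reformulation via $C_4$-transversals is correct, and the base case $n=1$ is fine. The base case $n=2$ you declare a ``finite structural check'' but do not perform; verifying that no connected graph on seven edges satisfies $G\to(2K_2,C_4)$ is already a nontrivial exercise. The inductive excision is where the plan is genuinely incomplete, and the obstruction you name is the right one: a minimal connected witness $G$ need not contain any copy of $K_{3,3}-e$, nor any local piece of edge-cost $8$ that cleanly accounts for two units of $f(G)$, so there is nothing obvious to excise. Your fallback --- a global charging scheme placing weight $9/2$ on each of the ``$n$ forced matching units'' --- is not well defined as written: the hypothesis $f(G)\ge n$ is a statement about \emph{every} $C_4$-transversal, not about a distinguished set of $n$ edges, so there are no canonical objects to receive the charge. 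Making this rigorous would at minimum require fixing a specific optimal transversal $R$ together with a maximum matching in it and then controlling how the rest of $R$ and the $4$-cycles of $G$ interact with that matching --- exactly the case analysis you leave open at the end. In short, what you have is a plausible heuristic for why the bound should be tight, matching the intuition behind the paper's upper-bound construction, but neither you nor the paper supplies a proof.
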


Even though solving the above conjecture seems out of our reach, we show a result which has the same flavour and has exact values: $\rc(nK_2,C_3)=4n-1$.

\begin{thm}
	\label{thm:C3}
	For all positive integers $n$, we have $\rc(nK_2,C_3)=4n-1$.
\end{thm}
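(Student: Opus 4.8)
The plan is to reduce the statement to a single structural parameter and then bound it from both sides. First I would reformulate the arrowing relation. A red/blue colouring of $G$ contains no blue $C_3$ exactly when the red edge set $R$ meets every triangle of $G$, and it contains no red $nK_2$ exactly when $\nu(R)\le n-1$, where $\nu$ denotes the matching number. Writing $\tau(G):=\min\{\nu(R):R\subseteq E(G)\text{ meets every triangle of }G\}$, one therefore has $G\to(nK_2,C_3)$ if and only if $\tau(G)\ge n$. Thus the theorem is equivalent to: the minimum of $\ab{E(G)}$ over connected graphs $G$ with $\tau(G)\ge n$ equals $4n-1$.

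For the upper bound I would exhibit the natural extremal graph: take $n$ vertex-disjoint triangles and join them into a tree by $n-1$ bridges, producing a connected graph $G_0$ with $3n+(n-1)=4n-1$ edges. No bridge lies on a triangle, so the only triangles of $G_0$ are the $n$ original, pairwise vertex-disjoint ones; any transversal must contain an edge from each, and these $n$ edges are pairwise disjoint, whence $\nu(R)\ge n$ for every transversal $R$. Hence $\tau(G_0)\ge n$, so $G_0\to(nK_2,C_3)$ and $\rc(nK_2,C_3)\le 4n-1$.

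The lower bound is the crux, and I would isolate it as the inequality $\tau(G)\le\flo{(\ab{E(G)}+1)/4}$ for every connected $G$; together with $\tau(G)\ge n$ this forces $\ab{E(G)}\ge 4n-1$. The first ingredient is that a \emph{maximal} edge-disjoint triangle packing $T_1,\dots,T_k$ gives the transversal $R=\bigcup_i E(T_i)$ (every triangle shares an edge with some $T_i$), whose matching number is at most $k$ because distinct matching edges lie in distinct triangles. This yields the crude bound $\tau(G)\le k\le\ab{E(G)}/3$, and in particular shows that arrowing needs at least $n$ edge-disjoint triangles. To sharpen the constant $1/3$ to the correct $1/4$ one must use connectivity, and I would prove the inequality by induction on $\ab{E(G)}$ along the block--cut-vertex structure. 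The bridge case is clean: removing a bridge splits $G$ into two vertex-disjoint connected pieces whose transversals and matchings combine additively, and superadditivity of $\flo{\cdot/4}$ closes the step.

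The main obstacle I anticipate is gluing at a cut vertex $v$ that is not a bridge endpoint: naively adding the two per-side bounds loses an additive constant, giving $\flo{(\ab{E(G)}+2)/4}$ instead of the required $\flo{(\ab{E(G)}+1)/4}$. The saving must come from reusing $v$, since when both sides can only attain their optimum with a maximum matching saturating $v$, the union cannot match $v$ twice and one edge is saved. I would therefore strengthen the induction hypothesis to track, for a prescribed vertex, whether some optimal transversal admits a maximum matching that leaves that vertex exposed, and carry this refined statement through the bridge, cut-vertex, and $2$-connected cases. The $2$-connected base case, where one repeatedly deletes a triangle vertex (together with a star or edge covering the triangles through it) and accounts for the edges removed via the floor function, is the most delicate part; here one leans on the fact that a $2$-connected graph other than a single triangle has no triangle with two degree-$2$ vertices, so each deletion destroys enough edges. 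This bookkeeping is exactly where the constant $4$, and the rigidity of the extremal graph $G_0$, are forced.
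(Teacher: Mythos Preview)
Your reformulation via $\tau(G)=\min\{\nu(R):R\ \text{meets every triangle}\}$ is correct and clarifying, and the upper-bound construction is the same as the paper's. The bridge step of your induction is also fine: with $|E(G_1)|+|E(G_2)|=|E(G)|-1$ one gets
\[
\tau(G)\le\tau(G_1)+\tau(G_2)\le\Big\lfloor\tfrac{|E(G_1)|+1}{4}\Big\rfloor+\Big\lfloor\tfrac{|E(G_2)|+1}{4}\Big\rfloor\le\Big\lfloor\tfrac{|E(G)|+1}{4}\Big\rfloor.
\]

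The genuine gap is in your $2$-connected ``base case''. You assert that, because a $2$-connected graph other than $K_3$ has no triangle with two degree-$2$ vertices, ``each deletion destroys enough edges''. But deleting a triangle vertex $c$ and colouring the star at $c$ red gives only $\tau(G)\le 1+\tau(G-c)$, and with $d(c)=3$ the induction yields $4\tau(G)-1\le |E(G)|+1$, not $|E(G)|$; equivalently your bound becomes $\lfloor(|E(G)|+2)/4\rfloor$, which is too weak precisely when $|E(G)|\equiv 2\pmod 4$. Your degree observation guarantees a triangle vertex of degree $\ge 3$, but what the arithmetic actually needs is degree $\ge 4$, and nothing you have said rules out the situation where \emph{every} triangle vertex has degree exactly $3$. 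So the bookkeeping does not close, and this is not a rounding artefact: the hard content of the theorem lives exactly here. Your proposed strengthening at cut vertices (tracking an exposable vertex) is plausible but is a separate device and does not address this $2$-connected shortfall.

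The paper avoids this impasse by arguing differently. Instead of inducting inside the $2$-connected case, it takes a minimal counterexample $G$ and uses minimality to force structure: $\delta(G)\ge 2$, no cut edge, $G$ is $2$-connected, $\Delta(G)\le 3$ (since deleting a degree-$\ge 4$ vertex and colouring its star red would finish by induction---exactly the move that \emph{does} work), hence $G$ is $3$-regular, and finally every edge lies in a triangle (else colour that edge blue). A short local argument then shows that no $2$-connected $3$-regular graph with every edge in a triangle exists, giving the contradiction. In effect, the paper absorbs the problematic ``all triangle vertices have degree $3$'' scenario into the endgame and kills it structurally, rather than trying to push a uniform edge-count inequality through it. If you want to repair your approach, that endgame (or something equivalent) is what you are missing.
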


Proofs of \cref{thm:P3} and \cref{thm:C3} will be presented in Section \ref{section2} and Section \ref{section3}, respectively. To prove the lower bounds, we need to discuss the connectivity of a graph $G$. If $G$ is not 2-connected, the basic properties of blocks and end blocks are needed, which can be found in Bondy and Murty \cite[Chap.~5.2]{bondy2008graph}. Moreover, the following terminology is used frequently in the proofs. We say $G$ has a \emph{$(G_1,G_2)$-colouring} if there is a red-blue edge colouring of $G$ such that $G$ contains neither a red $G_1$ nor a blue $G_2$. Thus, it is equivalent to $G\not\to (G_1,G_2)$.

\section{A matching versus $P_3$}\label{section2}
	For the upper bound, we know that $C_4\to (2K_2,P_3)$. If $n$ is even, then $\frac{n}{2}C_4\to (nK_2,P_3)$. The graph $\frac{n}{2}C_4$ has $n/2$ components and can be connected by adding $n/2-1$ edges. If $n$ is odd, then $\frac{n-1}{2}C_4\cup P_3\to (nK_2,P_3)$. The graph $\frac{n-1}{2}C_4\cup P_3$ has $(n+1)/2$ components and can be connected by adding $(n-1)/2$ edges. In both cases, we obtain a connected graph with $\flo{(5n-1)/2}$ edges and hence the upper bound follows.
	
	For the lower bound, we use induction on $n$. The result is obvious for $n=1,2$. Assume that for $k<n$ and any connected graph $G$ with $\flo{(5k-3)/2}$ edges, we have $G\not\to (kK_2,P_3)$. Now consider $G$ to be a connected graph with minimum number of edges such that $G\to (nK_2,P_3)$. Thus, for any proper connected subgraph $G'$ of $G$, we have $G'\not\to (nK_2,P_3)$. Since $n\ge 3$, $G$ has at least six edges. Suppose to the contrary that $G$ has at most $\flo{(5n-3)/2}$ edges. We will deduce a contradiction and hence $\rc(nK_2,P_3)\ge \flo{(5n-1)/2}$.

	An edge set $E_1$ of a connected graph $G$ is called \emph{deletable}, if $E_1$ satisfies the following conditions:
	\begin{lemenum}
		\item $E_1$ can be partitioned into two edge sets $E_2$ and $E_3$, where $E_2$ forms a star and $E_3$ forms a matching;
		\item any edge of $E(G)\setminus E_1$ is nonadjacent to $E_3$;
		\item the graph induced by $E(G)\setminus E_1$ is still connected.
	\end{lemenum}
	Note that for a deletable edge set $E_1$, the graph $G-E_1$ may have some isolated vertices, but all edges of $G-E_1$ belong to the same connected component. We have the following property of a deletable edge set. 
	
	\begin{claim}\label{clm: deletableedge}
		Every deletable edge set has size at most two.
	\end{claim}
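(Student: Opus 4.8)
The plan is to assume, toward a contradiction, that $G$ has a deletable edge set $E_1$ with $|E_1|\ge 3$, and to show that deleting $E_1$ leaves a connected graph that still arrows $((n-1)K_2,P_3)$; the induction hypothesis then makes this graph too large to fit inside $G$ under the standing bound $|E(G)|\le\flo{(5n-3)/2}$. The starting observation is that a red--blue colouring avoids a blue $P_3$ exactly when its blue edges form a matching. Thus a $(kK_2,P_3)$-colouring of a graph $H$ amounts to a choice of blue matching $B\subseteq E(H)$ for which the red graph $E(H)\setminus B$ has no matching of size $k$. Write $G'$ for the graph induced by $E(G)\setminus E_1$, which is connected by condition (c), and write $E_1=E_2\cup E_3$ with $E_2$ a star (centre $v$) and $E_3$ a matching, as in (a).

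The key step is to prove $G'\to((n-1)K_2,P_3)$. I would take any colouring of $G'$ with no blue $P_3$, whose blue edges then form a matching $B'$, and suppose for contradiction that the red graph $E(G')\setminus B'$ has no matching of size $n-1$, i.e.\ its maximum matching has size at most $n-2$. Extend this colouring to all of $G$ by colouring every edge of $E_3$ blue and every edge of $E_2$ red. By condition (b) the edges of $E_3$ meet no edge of $G'$, and $E_3$ is itself a matching, so $B'\cup E_3$ is again a matching and no blue $P_3$ is created. On the red side the new edges $E_2$ all pass through $v$, so a maximum matching of the red graph $(E(G')\setminus B')\cup E_2$ has size at most $(n-2)+1=n-1$ by subadditivity of the matching number; hence there is no red $nK_2$. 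This produces an $(nK_2,P_3)$-colouring of $G$, contradicting $G\to(nK_2,P_3)$. Therefore every colouring of $G'$ with no blue $P_3$ contains a red $(n-1)K_2$, i.e.\ $G'\to((n-1)K_2,P_3)$.

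To finish, I would apply the induction hypothesis. (If $G'$ has no edges, the extension above already colours all of $G$ and gives an immediate contradiction, so $G'$ is a nonempty connected graph.) Since adjoining edges preserves both connectivity and the arrowing relation, the induction hypothesis yields that every connected graph arrowing $((n-1)K_2,P_3)$ has at least $\flo{(5n-8)/2}+1=\flo{(5n-6)/2}$ edges, whence $|E(G')|\ge\flo{(5n-6)/2}$. Consequently $|E(G)|=|E(G')|+|E_1|\ge\flo{(5n-6)/2}+3=\flo{5n/2}>\flo{(5n-3)/2}$, contradicting the standing assumption $|E(G)|\le\flo{(5n-3)/2}$. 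Hence no deletable set of size at least three exists, which is the claim.

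I expect the main obstacle to be the verification that the extension is legitimate: that recolouring $E_3$ blue cannot create a blue $P_3$ (this is precisely where condition (b), the non-adjacency of $E_3$ to the rest of $G$, is used) and that recolouring the star $E_2$ red raises the maximum red matching by at most one (this is where the star structure in (a), together with subadditivity of the matching number, enters). The remaining point requiring care is the floor arithmetic, making sure the "$+3$" coming from $|E_1|\ge 3$ pushes the edge count strictly above $\flo{(5n-3)/2}$ for every parity of $n$; this is routine once the reduction to $((n-1)K_2,P_3)$ is in place.
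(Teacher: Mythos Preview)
Your argument is correct and is, in substance, the same as the paper's: both hinge on the extension step of colouring $E_2$ red and $E_3$ blue to pass from an $((n-1)K_2,P_3)$-colouring of $G'$ to an $(nK_2,P_3)$-colouring of $G$, together with the inductive bound for $k=n-1$. The only difference is framing: the paper applies the induction hypothesis directly (few edges $\Rightarrow$ $G'$ has an $((n-1)K_2,P_3)$-colouring $\Rightarrow$ extend $\Rightarrow$ contradiction), whereas you run the contrapositive (extension forces $G'\to((n-1)K_2,P_3)$ $\Rightarrow$ $G'$ has many edges $\Rightarrow$ contradiction), which is logically equivalent but slightly more roundabout.
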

	
	\begin{proof}
		Let $E_1$ be a deletable edge set. If $|E_1|\ge 3$, then the graph induced by $E(G)\setminus E_1$ has at most $\flo{(5n-3)/2}-3$ edges and hence an $((n-1)K_2, P_3)$-colouring by induction. We then colour all edges of $E_2$ red and all edges of $E_3$ blue. This is a $(nK_2, P_3)$-colouring of $G$, a contradiction.
	\end{proof}
	
	A \emph{non-cut vertex} of a connected graph is a vertex whose deletion still results in a connected graph. That is, every vertex of a nontrivial connected graph is either a cut vertex or a non-cut vertex. Since $E_3$ in the definition of a deletable edge set can be empty, the edges incident to a non-cut vertex form a deletable edge set. We have the following direct corollary. 
	
	\begin{claim}\label{clm: noncut}
		Every non-cut vertex has degree at most two.
	\end{claim}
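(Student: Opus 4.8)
The plan is to derive Claim~\ref{clm: noncut} directly from Claim~\ref{clm: deletableedge} by exhibiting a deletable edge set attached to a non-cut vertex and invoking the size bound already proved. Let $v$ be a non-cut vertex of $G$, and let $E_1$ be the set of all edges of $G$ incident to $v$. I would verify that $E_1$ meets the three conditions in the definition of a deletable edge set, with the splitting $E_2 = E_1$ (all the edges at $v$, which form a star centred at $v$) and $E_3 = \emptyset$ (the empty matching). Condition (a) holds because a collection of edges sharing the common endpoint $v$ is exactly a star, and the empty set is trivially a matching. Condition (b) holds vacuously: since $E_3$ is empty, there is no edge of $E_3$ to which an edge of $E(G)\setminus E_1$ could be adjacent. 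Condition (c) is precisely the defining property of a non-cut vertex---deleting $v$ (equivalently, deleting all edges at $v$ and discarding the resulting isolated vertex $v$) leaves a connected graph, so the graph induced by $E(G)\setminus E_1$ is connected.

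Having checked that $E_1$ is deletable, Claim~\ref{clm: deletableedge} immediately gives $|E_1|\le 2$. Since $|E_1|$ is exactly the degree of $v$ (each edge at $v$ is counted once, and $v$ has no loops in a simple graph), this says $\deg(v)\le 2$, which is the assertion of Claim~\ref{clm: noncut}.

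The excerpt essentially flags this argument already in the paragraph preceding the statement, noting that ``the edges incident to a non-cut vertex form a deletable edge set'' because $E_3$ may be empty; so the proof is a short verification rather than a substantive argument. I do not anticipate a genuine obstacle here, but the one point deserving a line of care is the bookkeeping in condition (c): when we remove all edges incident to $v$, the vertex $v$ itself becomes isolated, and we must use the convention (stated in the remark after the definition) that a deletable edge set is allowed to leave isolated vertices, with connectivity required only of the component containing the surviving edges. Under that convention, the non-cut property of $v$ is exactly what guarantees condition (c), and the claim follows at once.
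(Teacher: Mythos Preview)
Your argument is correct and is exactly the paper's own approach: the paper states, just before the claim, that since $E_3$ may be empty the edges incident to a non-cut vertex form a deletable edge set, and labels the claim a ``direct corollary'' of Claim~\ref{clm: deletableedge}. You have simply written out this one-line observation in full detail, including the bookkeeping on condition~(c), so there is nothing to add.
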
	
	
	If $G$ is a 2-connected graph, by \cref{clm: noncut}, $G$ is a cycle. Beginning from any edge of $G$, we may colour all edges consecutively along the cycle. We alternately colour two edges red and one edge blue, until all edges of $G$ have been coloured. Obviously $G$ contains no blue $P_3$. From $(5n-3)/2\le 3(n-1)$ and the colouring of $G$ we see that $G$ contains no red matching with $n$ edges. Thus, $G\not\to (nK_2,P_3)$.
	
	Now we assume that $G$ is connected but not 2-connected. Recall that a \emph{block} of a graph is a subgraph that is nonseparable and is maximal with respect to this property. An \emph{end block} is a block that contains exactly one cut vertex of $G$. We have the following observation.
	
	\begin{claim}\label{clm: endblock}
		Every end block is either a $K_2$ or a cycle.
	\end{claim}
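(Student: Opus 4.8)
The plan is to pin down the degree of every vertex of an end block. Let $B$ be an end block of $G$ and let $v$ be the unique cut vertex of $G$ lying in $B$. Since a $K_2$ is explicitly permitted by the claim, I would immediately dispose of that case and assume that $B$ is a nontrivial block, i.e.\ $2$-connected with at least three vertices; for such a $B$ it then suffices to prove that $B$ is a cycle.

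First I would bound the degrees of all vertices of $B$ other than $v$. Because $B$ is an end block, $v$ is the only cut vertex of $G$ inside $B$, so every other vertex $u$ of $B$ is a non-cut vertex of $G$. A non-cut vertex belongs to a single block, namely $B$, so all edges incident to $u$ lie in $B$ and therefore $\deg_B(u)=\deg_G(u)$. \cref{clm: noncut} then gives $\deg_B(u)\le 2$. On the other hand, a $2$-connected graph on at least three vertices has minimum degree at least $2$, so in fact $\deg_B(u)=2$ for every $u\neq v$. Thus $B$ has all degrees equal to $2$ except possibly at the single vertex $v$.

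The final step turns this degree information into the cyclic structure. I would look at $B-v$: since $B$ is $2$-connected this graph is connected, and it has maximum degree at most $2$, so it is either a path or a cycle. It cannot be a cycle, for then no vertex of $B-v$ would send an edge to $v$ and $v$ would be isolated, contradicting the connectivity of $B$. Hence $B-v$ is a path $u_1u_2\cdots u_k$. Its internal vertices already use up their two edges along the path, so they are not adjacent to $v$; its endpoints $u_1,u_k$ each have exactly one further edge, which must go to $v$ in order to realise $\deg_B(u_1)=\deg_B(u_k)=2$. Consequently $v$ is adjacent precisely to $u_1$ and $u_k$, so $\deg_B(v)=2$ as well and $B$ is the cycle $vu_1u_2\cdots u_kv$.

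The argument is short, and I do not expect a serious obstacle; the only points requiring care are the block-theoretic bookkeeping that guarantees every edge at a non-cut vertex stays inside $B$ (so that the degree bound of \cref{clm: noncut} can legitimately be applied within $B$), and the exclusion of the degenerate sub-cases when reading off the structure of $B-v$. The substantive move is the upgrade from ``degree at most two'' to ``degree exactly two'' via $2$-connectivity, after which the path/cycle dichotomy closes the argument.
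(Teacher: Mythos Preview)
Your proof is correct and follows essentially the same approach as the paper: both invoke \cref{clm: noncut} to force every non-cut vertex of $B$ to have degree exactly two in $B$, deduce that $B-v$ is a path or a cycle, and then pin down the edges at $v$ to conclude that $B$ itself is a cycle. The only cosmetic difference is that the paper bounds $\deg_B(v)$ by counting degree-one vertices of $B-v$, whereas you read it off directly from the path structure; the underlying argument is the same.
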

	
	\begin{proof}
		Let $B$ be an end block with at least three vertices, and let $v$ be the single cut vertex of $G$ that is contained in $B$. Since $B$ is 2-connected, the subgraph $B-v$ is still connected. By \cref{clm: noncut}, every non-cut vertex has degree two. It follows that $B-v$ is either a path or a cycle. We see that $v$ has two neighbours in $B$. If not, $v$ has at least three neighbours in $B$, each of which has degree one in $B-v$. Since a path has two vertices of degree one and a cycle has no vertex of degree one, $B-v$ is neither a path nor a cycle, a contradiction. Hence, every vertex of $B$ has two neighbours in $B$. Since $B$ is 2-connected, it must be a cycle.
	\end{proof}
	
	Since $G$ is not 2-connected, there is at least one cut vertex. Choose any cut vertex as a \emph{root}, denoted by $r$. For a vertex $u$ of $G$, if any path from $u$ to $r$ must pass through a cut vertex $v$, then $u$ is called a \emph{(vertex) descendant} of $v$. For any edge $e$ of $G$, if both ends of $e$ are descendants of $v$, then $e$ is called an \emph{edge descendant} of $v$. For a cut vertex $v$, the block containing $v$ but no other descendant of $v$ is called a \emph{parent block} of $v$. It is obvious that every cut vertex has a unique parent block, except that the root $r$ has no parent block. If $v$ is a cut vertex but every descendant of $v$ is not a cut vertex of $G$, we call $v$ an \emph{end-cut}. It is obvious that $G$ has at least one end-cut. We have the following property of end-cuts.
	
	\begin{claim}\label{clm: end-cut}
		Every end-cut is contained in a unique end block, which is $K_2$. Moreover, if an end-cut is not the root of $G$, its parent block is also $K_2$.
	\end{claim}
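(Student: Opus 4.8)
The plan is to establish the apparently stronger fact that every end-cut $v$ has degree exactly $2$ in $G$; the statement then follows immediately, because the two edges at $v$ lie in two distinct blocks (a child block and, unless $v=r$, the parent block), a block meeting $v$ in a single edge must be $K_2$, and a cycle child block would already account for two edges at $v$.

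To set up, note that every block containing $v$ other than the parent block is an end block: each of its vertices other than $v$ is a descendant of $v$, hence a non-cut vertex (as $v$ is an end-cut), so $v$ is its only cut vertex; call such a block a \emph{child block}. By \cref{clm: endblock} each child block is a $K_2$ or a cycle. Suppose first that no child block is a cycle. Then each child block is a pendant edge $vw$; indeed $w$ is non-cut, hence lies in a single block, hence is a leaf. Now the set $E_1$ of all edges at $v$ is a star (take $E_3=\emptyset$), and deleting it isolates $v$ together with these leaves while leaving every surviving edge in the single parent-side component, because the only descendants of $v$ are the leaves and they carry no other edge. Thus $E_1$ is deletable, so $\deg(v)=|E_1|\le 2$ by \cref{clm: deletableedge}; as a cut vertex has degree at least $2$, in fact $\deg(v)=2$, which yields the claim.

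It remains to rule out cycle child blocks. A triangle $vw_1w_2$ is eliminated at once: $\{vw_1,vw_2\}$ is a star and $\{w_1w_2\}$ a matching, and since $w_1,w_2$ have no edge outside the triangle this is a deletable set of size $3$, contradicting \cref{clm: deletableedge}. For a longer cycle $C_\ell$ I would delete its $\ell-1$ vertices other than $v$, leaving a connected graph $G_0$ with $\ell$ fewer edges, and invoke the induction hypothesis at level $n-\cei{\ell/3}$; colouring $C_\ell$ with no blue $P_3$ and red matching number $\cei{\ell/3}$ and superposing the colourings gives a $(nK_2,P_3)$-colouring of $G$ (any red matching of $G$ splits into its cycle part and its $G_0$ part, so the two matching numbers add and total at most $n-1$). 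The required inequality $\flo{(5n-3)/2}-\ell\le\flo{(5(n-\cei{\ell/3})-3)/2}$ holds for all $\ell\ge3$ except $\ell\in\{4,7\}$.

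The main obstacle is thus precisely $C_4$ and $C_7$, the only cycles whose ratio of edges to forced red matching ($2$ and $7/3$) lies below the global budget $5/2$, leaving a one-edge deficit in the induction; moreover no deletable set of size $3$ is available within such a cycle. For these I would use the edge-minimality of $G$: removing a single cycle edge produces a connected proper subgraph, which therefore carries a good colouring, and the aim is to re-insert that edge in blue. For $C_4$ this is assisted by the fact that both perfect matchings of $C_4$ use the cut vertex $v$, so a red $2$-matching of the cycle cannot be adjoined to a parent-side red matching already saturating $v$; the genuine difficulty---and the crux of the whole claim---is that the cycle edge at $v$ may be forced blue in the induced colouring, so one must show that some choice of deleted edge admits a valid blue re-insertion. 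The case $C_7$ requires a similar but more delicate argument.
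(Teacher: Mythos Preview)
Your reduction to showing $\deg_G(v)=2$ is a nice reformulation, and the argument when every child block is a $K_2$ (so that all edges at $v$ form a deletable star) is correct. But the proposal is genuinely incomplete: you yourself isolate $C_4$ and $C_7$ as the cases where your per-block counting inequality fails, and the sketch you offer for them---delete one cycle edge and appeal to edge-minimality of $G$---is not carried through. Re-inserting the deleted edge in blue can create a blue $P_3$ whenever a neighbouring edge happens to be blue in the induced colouring, and nothing you have said forces some choice of deleted edge to avoid this. The difficulty you flag is real and is left open.

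The paper sidesteps this obstacle by a different decomposition. Rather than peeling off one child block at a time, it removes $v$ together with \emph{all} of its descendants in one step, colours every edge incident to $v$ red and every descendant cycle by the $RRB$ pattern, and then applies induction to the remaining connected graph $G'$. The key gain is that the $t_1\ge 1$ edges from $v$ into its parent block are pooled into the count $x$ of coloured edges but contribute nothing to the red-matching count $y$ beyond the single $+1$ already charged to $v$. This extra slack is precisely what makes the uniform inequality $y\le 2x/5$ go through for every $t\ge 1$, with no exceptional cycle lengths; the $C_4$ and $C_7$ cases that break your isolated analysis are absorbed automatically. The only configuration that survives is $t=0$, $t_1=t_2=1$, which is the content of the claim.

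A smaller point: when you superpose a colouring of $C_\ell$ with one of $G_0$, both contain $v$, so a blue edge of $C_\ell$ at $v$ together with a blue edge of $G_0$ at $v$ would already form a blue $P_3$. This is easily repaired by starting the cycle pattern so that both $v$-edges of $C_\ell$ are red, but it should be said explicitly.
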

	
	\begin{proof}
		Let $v$ be an end-cut. If $v$ is not the root $r$, by the definition of end-cut, every block containing $v$ is an end block, except for its parent block. If we delete $v$ and all descendants of $v$ from $G$, the induced subgraph is still  connected, denoted by $G'$. This is because, any vertex of $G'$ is not a descendant of $v$. For any two vertices of $G'$, there is a path joining them in $G$ without passing through $v$. So the path still exists in $G'$ and hence $G'$ is connected. In the following, regardless of whether $v$ is the root or not, we first colour all edges incident to $v$ red, then give a colouring of all edge descendants of $v$. After that, we find a colouring of $G'$ by the inductive hypothesis. We prove that this edge colouring of $G$ is an $(nK_2,P_3)$-colouring under certain conditions.
		
		By \cref{clm: endblock}, every end block is either a $K_2$ or a cycle. Assume that $v$ has $t_1$ neighbours in its parent block. Note that if $v$ is the root of $G$, then $t_1=0$. Assume $v$ is contained in $t_2$ blocks which are $K_2$, and in $t$ blocks which are cycles. Let $p_1+2, p_2+2, \dots, p_t+2$ be the cycle lengths of these $t$ cycles. If we remove $v$ from $G$, the cycles become $t$ disjoint paths with length $p_1, p_2, \dots, p_t$ respectively. We colour all edges incident to $v$ red. For each path with length $p_i$, where $1\le i\le t$, we colour all edges from one leaf to the other leaf consecutively along the path, alternately with one edge blue and two edges red. Now we have coloured $x:=t_1+t_2+2t+p_1+p_2+\dots+p_t$ edges, no blue $P_3$ appears, and the maximum red matching has $y:=1+\flo{(p_1+1)/3}+\dots+\flo{(p_t+1)/3}$ edges.
		
		If $v$ is the root, then we have already coloured all edges of $G$. So $x\le \flo{(5n-3)/2}$, and we need to check that $y\le n-1$. Since $G$ has at least six edges, we have $6t_2+7t+p_1\ge 11$. Thus, $n\ge (2x+3)/5\ge (2t_2+4t+2p_1+\dots+2p_t+3)/5\ge (t+p_1+\dots+p_t+4)/3>y$. This implies that $G\not\to (nK_2,P_3)$.
		
		If $v$ is not the root, recall that $G'$ is formed by the remaining edges and is connected. We can use the inductive hypothesis. The graph $G'$ has at most $\flo{(5n-3)/2}-x$ edges, which is $\flo{(5(n-2x/5)-3)/2}\le \flo{(5(n-\flo{2x/5})-3)/2}$. So $G'$ has an $((n-\flo{2x/5})K_2,P_3)$-colouring. It is not difficult to check that $G$ has no blue $P_3$, and the maximum red matching has at most $y+n-\flo{2x/5}-1$ edges. It is left to show that under what conditions $y+n-\flo{2x/5}-1$ is less than $n$. So we deduce a contradiction.
		
		Since $v$ has at least one neighbour in its parent block, it follows that $t_1\ge 1$. If $t\ge 1$, then $1+(t-1)/3\le (4t+1)/5$, and $\flo{(p_1+1)/3}\le (2p_1+1)/5$. Thus,
		\begin{align*}			
			y&=1+\flo{(p_1+1)/3}+\flo{(p_2+1)/3}+\dots+\flo{(p_t+1)/3}\\
			&\le 1+\flo{(p_1+1)/3}+(p_2+1)/3+\dots+(p_t+1)/3\\
			&\le 1+\flo{(p_1+1)/3}+(t-1)/3+2(p_2+\dots+p_n)/5\\
			&\le (4t+1)/5+(2p_1+1)/5+2(p_2+\dots+p_n)/5\\
			&\le 2(t_1+t_2+2t+p_1+p_2+\dots+p_t)/5=2x/5. 	
		\end{align*}
	If $t=0$ and $t_1+t_2\ge 3$, then $y=1<6/5\le 2x/5$. In both cases, we have $y\le 2x/5$ and hence $y+n-\flo{2x/5}-1<n$.
		
		Now we consider the remainder case when $t=0$ and $t_1+t_2\le 2$. Since $v$ is an end-cut, we have $t_2\ge 1$. Thus, $t_1=t_2=1$. It follows from $t=0$ and $t_2=1$ that $v$ is contained in a unique end block which is $K_2$. It follows from $t_1=1$ that $v$ has only one neighbour in its parent block. Since each block is either a $K_2$ or a 2-connected subgraph, the parent block of $v$ must be $K_2$.
	\end{proof}
	
	Let $v$ be an end-cut. By \cref{clm: end-cut}, it cannot be the root of $G$. Let $u$ be the other end of its parent block, and $v^+$ the descendant of $v$. If $u$ is contained in an end block which is an edge $uu^+$, then $uv, uu^+, vv^+$ form a deletable edge set. If $u$ is contained in an end block which is not an edge, by \cref{clm: endblock}, the end block is a cycle. Let $u^+$ be a neighbour of $u$ on the cycle. Then $uv, uu^+, vv^+$ form a deletable edge set. If $u$ has at least two end-cuts as its descendants, let $w$ be another end-cut and $uw, ww^+$ edge descendants of $u$. Then $uv, vv^+, uw, ww^+$ form a deletable edge set. If $u$ has only one end-cut as its descendant, which is $v$, then all edges incident to $u$ and the edge $vv^+$ form a deletable edge set. By \cref{clm: deletableedge}, each of the above cases leads to a contradiction. This completes the proof of the lower bound.

\section{A matching versus $C_3$}\label{section3}
Now we prove \cref{thm:C3}. The graph $nC_3$ has $n$ components and can be connected by adding $n-1$ edges. Denote this connected graph by $H$. It follows from $nC_3\to (nK_2,C_3)$ that $H\to (nK_2,C_3)$. Thus, $\rc(nK_2,C_3)\le 4n-1$. 

Set $\mathcal{G}=\{G:|E(G)|\le 4n-2, \text{and}\ G\to (nK_2,C_3)\}$. We will prove that $\mathcal{G}$ is an empty set and hence the lower bound follows. Suppose not, choose a graph $G$ from $\mathcal{G}$ with minimum order and minimum size subjective to its order. Thus, for any proper connected subgraph $G'$ of $G$ with at most $4k-2$ edges, we have $G'\not\to (kK_2,C_3)$. We present the proof through a sequence of claims.

	\begin{claim}\label{clm:minimumdegree}
	The minimum degree of $G$ is at least two.
    \end{claim}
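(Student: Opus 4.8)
The plan is to rule out vertices of degree $0$ and degree $1$ separately. Degree $0$ is immediate from connectivity: colouring every edge of $G$ blue must produce a blue $C_3$, so $G$ contains a triangle and in particular at least three vertices, and being connected it therefore has no isolated vertex. The real content of the claim is to exclude a vertex of degree $1$. Suppose for contradiction that $v$ is a leaf of $G$, with unique neighbour $u$ and incident edge $e=uv$, and set $G'=G-v$.

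I would then record the three properties of $G'$ that drive the reduction. First, deleting a leaf from a connected graph preserves connectivity, so $G'$ is connected. Second, $|E(G')|=|E(G)|-1\le 4n-3\le 4n-2$. Third, $|V(G')|=|V(G)|-1$, so $G'$ has strictly smaller order than $G$. Since $G$ was chosen with minimum order among all members of $\mathcal{G}$, the connected graph $G'$, having at most $4n-2$ edges and fewer vertices, cannot lie in $\mathcal{G}$; hence $G'\not\to(nK_2,C_3)$, and we may fix a $(nK_2,C_3)$-colouring $c'$ of $G'$. The heart of the argument is to extend $c'$ to $G$ by colouring the single new edge $e$ \emph{blue}. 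Because $v$ has degree $1$, the edge $e$ lies in no triangle of $G$, so this creates no blue $C_3$; and since $e$ is blue it cannot enlarge any red matching. Thus the extended colouring inherits from $c'$ the absence of both a blue $C_3$ and a red $nK_2$, giving a $(nK_2,C_3)$-colouring of $G$. This contradicts $G\to(nK_2,C_3)$, and therefore $\delta(G)\ge 2$.

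I expect the only delicate point to be the choice of colour for $e$: it must be coloured blue rather than red, and the justification is exactly that a degree-one vertex can belong to no triangle, so no blue $C_3$ can ever be manufactured from it. The remaining steps—verifying that $G'$ is connected and that $|E(G')|\le 4n-2$—are routine bookkeeping, and it is worth noting that this claim needs only the minimality of the order of $G$ within $\mathcal{G}$, not the full inductive hypothesis on $n$.
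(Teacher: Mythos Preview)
Your argument is correct and is essentially the paper's own proof: delete a leaf $v$, use the minimality of $G$ (in order) within $\mathcal{G}$ to get an $(nK_2,C_3)$-colouring of $G-v$, and extend by colouring the pendant edge blue, which cannot create a blue $C_3$ nor a larger red matching. Your additional handling of the degree-$0$ case and your explicit discussion of why the new edge must be blue are helpful elaborations, but the underlying strategy is identical; note also that the connectivity of $G'=G-v$ is not actually needed, since membership in $\mathcal{G}$ does not require connectivity and the minimality of $|V(G)|$ alone forces $G'\notin\mathcal{G}$.
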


    \begin{proof}
	Suppose that $G$ has a vertex $v$ of degree one. Then $G-v$ has an $(nK_2,C_3)$-colouring. It can be extended to an $(nK_2,C_3)$-colouring of $G$ by colouring the edge incident to $v$ blue, which contradicts our assumption that $G\to (nK_2,C_3)$. Thus, $\delta(G)\ge 2$.
    \end{proof}

	\begin{claim}\label{clm:nocutedge}
	The graph $G$ has no cut edge.
    \end{claim}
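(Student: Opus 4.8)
The plan is to argue by contradiction. Suppose $G$ has a cut edge $e=uv$, and let $G_1$ and $G_2$ be the two components of $G-e$, with $u\in V(G_1)$ and $v\in V(G_2)$. By \cref{clm:minimumdegree} both $u$ and $v$ have degree at least two in $G$, so each of $G_1,G_2$ retains at least one further edge incident to $u$ (resp.\ $v$); in particular both are proper connected subgraphs of $G$ carrying at least one edge, which is exactly what is needed to feed them into the induction hypothesis.

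The single structural observation driving the argument is that a cut edge lies on no cycle, so colouring $e$ blue can never create a blue $C_3$. I would therefore build an $(nK_2,C_3)$-colouring of $G$ as follows: colour $e$ blue, and colour each $G_i$ by an economical colouring supplied by the induction hypothesis. Concretely, set $n_i=\cei{(\ab{E(G_i)}+2)/4}$ for $i=1,2$, so that $\ab{E(G_i)}\le 4n_i-2$; since $G_i$ is a proper connected subgraph, the induction hypothesis yields $G_i\not\to(n_iK_2,C_3)$, i.e.\ an $(n_iK_2,C_3)$-colouring of $G_i$. Overlaying these two colourings together with the blue edge $e$, no blue triangle appears: every triangle of $G$ lies entirely inside $G_1$ or inside $G_2$ (a triangle cannot use the bridge $e$, and a cycle crossing the single edge cut would have to cross it at least twice). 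Moreover, since $e$ is blue and the two sides share no vertex off $e$, any red matching of $G$ is the disjoint union of a red matching of $G_1$ and one of $G_2$, hence has at most $(n_1-1)+(n_2-1)=n_1+n_2-2$ edges.

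It then remains to check the arithmetic $n_1+n_2\le n+1$, which forces the largest red matching to have at most $n-1$ edges and thus rules out a red $nK_2$, contradicting $G\to(nK_2,C_3)$. Here $\ab{E(G_1)}+\ab{E(G_2)}=\ab{E(G)}-1\le 4n-3$, and using $\cei{(a+2)/4}\le (a+5)/4$ one obtains $n_1+n_2\le(\ab{E(G_1)}+\ab{E(G_2)}+10)/4\le(4n+7)/4$, so the integer $n_1+n_2$ is at most $n+1$. The main obstacle is precisely this bookkeeping: the bound works only after the ``$+2$'' slack in $\ab{E(G_i)}\le 4n_i-2$ is correctly distributed between the two sides, and one must separately check the degenerate possibility that one side is very small—guaranteed nonempty by \cref{clm:minimumdegree}, so that each $n_i\ge 1$ and the induction genuinely applies.
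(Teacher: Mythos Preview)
Your proof is correct and follows essentially the same route as the paper: split $G$ at the cut edge, apply the minimality hypothesis to each side with $n_i=\cei{(|E(G_i)|+2)/4}$ (the paper writes the equivalent condition $4k_i-5\le|E(G_i)|\le4k_i-2$), colour $e$ blue, and check $n_1+n_2\le n+1$. Your appeal to \cref{clm:minimumdegree} to guarantee each component is nonempty is harmless but not actually needed, since a single vertex trivially admits a $(K_2,C_3)$-colouring.
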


    \begin{proof}
	Suppose that $G$ has a cut edge $e$. Then $G-e$ has two connected components $X$ and $Y$. Let $k_1,k_2$ be the integers such that $4k_1-5\le |E(X)|\le 4k_1-2$ and $4k_2-5\le |E(Y)|\le 4k_2-2$ respectively. Then $X$ has a $(k_1K_2,C_3)$-colouring and $Y$ has a $(k_2K_2,C_3)$-colouring. It can be extended to an $((k_1+k_2-1)K_2,C_3)$-colouring of $G$ by colouring $e$ blue. So the maximum red matching has at most $k_1+k_2-2$ edges. From $(4k_1-5)+(4k_2-5)+1\le |E(X)|+|E(Y)|+1\le 4n-2$ we deduce that $k_1+k_2-2\le n-1/4$, a contradiction which implies our claim.
   \end{proof}

    \begin{claim}\label{clm:2-connected}
	The graph $G$ is 2-connected.
    \end{claim}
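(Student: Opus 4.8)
The plan is to prove that $G$ is $2$-connected by combining the two claims already established, namely that $\delta(G)\ge 2$ (\cref{clm:minimumdegree}) and that $G$ has no cut edge (\cref{clm:nocutedge}), with an argument that rules out cut vertices. First I would suppose for contradiction that $G$ has a cut vertex $v$; since $G$ has no cut edge by \cref{clm:nocutedge}, every block of $G$ is $2$-connected and so contains at least three vertices and at least three edges. I would then look at an end block $B$, which meets the rest of $G$ in the single cut vertex $v$, and argue that $B-v$ together with the rest of the graph can be coloured separately and the colourings merged, the key point being that $v$ is a shared vertex rather than a shared edge, so no new triangle or red matching edge is created at the interface.

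The core estimate will run parallel to the cut-edge argument in \cref{clm:nocutedge}. I would split $G$ at the cut vertex $v$ into the end block $B$ and the subgraph $G'$ consisting of the remaining blocks (both containing $v$), and choose integers $k_1,k_2$ with $4k_1-5\le \ab{E(B)}$ and $4k_2-5\le \ab{E(G')}$ so that each piece admits the appropriate $(k_iK_2,C_3)$-colouring by the minimality of $G$. Because $v$ is the only common vertex, a red matching in $G$ uses red edges of $B$ and red edges of $G'$ that can share at most the vertex $v$; hence the maximum red matching has size at most $k_1+k_2-1$ (with a saving of one coming from the fact that any triangle of $G$ lies wholly inside a single block, so colouring can avoid a blue $C_3$ globally). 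Carrying through the inequality $\ab{E(B)}+\ab{E(G')}\le \ab{E(G)}\le 4n-2$, together with $\ab{E(B)}\ge 3$ and $\ab{E(G')}\ge 3$, should force $k_1+k_2-1\le n-1$, contradicting $G\to(nK_2,C_3)$.

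The main obstacle I anticipate is controlling the matching across the cut vertex: unlike the cut-edge case, where deleting $e$ cleanly separates the two sides, here the vertex $v$ is shared, so a maximum matching might saturate $v$ on one side and leave it free on the other. I would handle this by observing that at most one of the two red matchings can use an edge at $v$, so the combined red matching number is bounded by the sum of the two independent matching numbers, and the careful bookkeeping of the constants $4k_i-5$ (rather than $4k_i-2$) is exactly what absorbs this one-edge overlap. A secondary subtlety is that $G$ might have several blocks through $v$, not just two; I would either iterate the two-way split or group all non-end blocks together as $G'$, noting that $G'$ is connected because every block through $v$ shares $v$. Once the contradiction is derived, $G$ has no cut vertex and no cut edge, and being connected with $\ab{V(G)}\ge 3$, it is therefore $2$-connected, completing the claim.
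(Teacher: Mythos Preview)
Your overall structure is right, but the arithmetic you are counting on does not close. First, a small slip: if each piece has a $(k_iK_2,C_3)$-colouring then its maximum red matching has at most $k_i-1$ edges, so the combined bound is $k_1+k_2-2$, not $k_1+k_2-1$; the shared-vertex issue never hurts an \emph{upper} bound on the matching number. But even with the correct bound $k_1+k_2-2$, from $(4k_1-5)+(4k_2-5)\le |E(B)|+|E(G')|\le 4n-2$ you only get $k_1+k_2-2\le n$, not $k_1+k_2-2\le n-1$. The extra hypotheses $|E(B)|\ge 3$ and $|E(G')|\ge 3$ do not help: take $|E(B)|=3$ (so $k_1=2$) and $|E(G')|=4n-5$ (so $k_2=n$); then $|E(B)|+|E(G')|=4n-2$ and $k_1+k_2-2=n$, so the colouring you produce may well contain a red $nK_2$. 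The ``careful bookkeeping of the constants $4k_i-5$'' does not absorb anything here; the boundary case is genuinely live.

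The paper's proof faces exactly the same boundary case $k_1+k_2-2=n$ and disposes of it with a second, different colouring. In that case one must have $|E(G_i)|=4k_i-5$ for both pieces. Since $G$ has no cut edge, every block through $v$ is $2$-connected, so $v$ has at least two neighbours in each block. Consequently, deleting $v$ from $G_1$ (and, on the other side, either deleting $v$ or deleting one edge $vv_i$ from each block $B_i$ through $v$ with $i\ge 2$) removes at least two edges from each piece while keeping it connected, leaving at most $4(k_i-1)-2$ edges; minimality then supplies $((k_i-1)K_2,C_3)$-colourings of the reduced pieces, and colouring all the removed edges at $v$ red yields an $((k_1+k_2-2)K_2,C_3)$-colouring of $G$, i.e.\ an $(nK_2,C_3)$-colouring. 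That extra step is the missing idea in your plan.
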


    \begin{proof}
	If $G$ is not 2-connected, let $v$ be a cut vertex of $G$, and $B_1,B_2,\dots,B_\ell$ the blocks containing $v$, where $\ell\ge 2$. If $v$ has only one neighbour in $B_i$ for some $i$ with $1\le i\le \ell$, then $B_i$ is not 2-connected. Since any block is either 2-connected or a $K_2$, $B_i$ has to be $K_2$. Hence, $B_i$ is a cut edge \cite[Chap.~4.1.18]{west2001introduction}, which contradicts \cref{clm:nocutedge}. Thus, for each $B_i$ with $1\le i\le \ell$, $v$ has at least two neighbours in $B_i$.
	
	The vertex set $V(G)$ can be partitioned into two parts $X$ and $Y$ as follows. If any path from $u$ to $v$ has to pass through a vertex of $B_1$ other than $v$, then $u\in X$; otherwise $u\in Y$. Let $G_1$ and $G_2$ be the subgraphs induced by $X\cup \{v\}$ and $Y$ respectively. It is obvious that $G_1$ contains $B_1$ and $G_2$ contains $B_2\cup \dots\cup B_\ell$. And they share only one vertex, which is $v$. Let $k_1,k_2$ be the integers such that $4k_1-5\le |E(G_1)|\le 4k_1-2$ and $4k_2-5\le |E(G_2)|\le 4k_2-2$ respectively. Then $G_1$ has a $(k_1K_2,C_3)$-colouring and $G_2$ has a $(k_2K_2,C_3)$-colouring. Combining the two colourings we have an $((k_1+k_2-1)K_2,C_3)$-colouring of $G$. So the maximum red matching has at most $k_1+k_2-2$ edges. From $(4k_1-5)+(4k_2-5)\le |E(G_1)|+|E(G_2)|\le 4n-2$ we deduce that $k_1+k_2-2\le n$. If $k_1+k_2-2<n$, then this colouring is an $(nK_2,C_3)$-colouring of $G$. If $k_1+k_2-2=n$, then we have $|E(G_1)|=4k_1-5$ and $|E(G_2)|=4k_2-5$. We obtain an $(nK_2,C_3)$-colouring of $G$ as follows. If $\ell=2$, then both $B_1-v$ and $B_2-v$ are connected. Hence, both $G_1-v$ and $G_2-v$ are connected, which implies that they have a $((k_1-1)K_2,C_3)$-colouring and a $((k_2-1)K_2,C_3)$-colouring, respectively. It can be extended to an $((k_1+k_2-2)K_2,C_3)$-colouring of $G$ by colouring all edges incident to $v$ red. Thus, $G\not\to (nK_2,C_3)$. If $\ell\ge 3$, then for each $i$ with $2\le i\le \ell$, we delete an edge $vv_i$ from $B_i$. Both $G_1-v$ and $G_2-\{vv_2,\dots,vv_\ell\}$ are connected. So they have a $((k_1-1)K_2,C_3)$-colouring and a $((k_2-1)K_2,C_3)$-colouring, respectively. It can be extended to an $((k_1+k_2-2)K_2,C_3)$-colouring of $G$ by colouring the remaining edges red. Again, $G\not\to (nK_2,C_3)$.
    \end{proof}

    \begin{claim}\label{clm:maximumdegree}
	The maximum degree of $G$ is at most three.
    \end{claim}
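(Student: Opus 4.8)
The plan is to argue by contradiction: suppose some vertex $v$ has $\deg(v)\ge 4$ and delete it. The whole proof rests on a single arithmetic coincidence, namely that removing a degree-$\ge 4$ vertex deletes at least four edges, which is exactly the gap between the thresholds $4n-2$ and $4(n-1)-2$ built into the minimality hypothesis. Concretely, since $G$ is $2$-connected by \cref{clm:2-connected}, the graph $G-v$ is still connected, and it is a proper connected subgraph of $G$ with $|E(G-v)|=|E(G)|-\deg(v)\le (4n-2)-4=4(n-1)-2$ edges. Applying the minimality hypothesis with $k=n-1$ gives $G-v\not\to((n-1)K_2,C_3)$, so $G-v$ admits an $((n-1)K_2,C_3)$-colouring.

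The second step is to extend this colouring to all of $G$ by colouring every edge incident to $v$ red, and to check that the result is an $(nK_2,C_3)$-colouring, contradicting $G\to(nK_2,C_3)$. For the blue cycle: since all edges at $v$ are red, any triangle through $v$ uses two red edges and hence is not all-blue, while any triangle avoiding $v$ lies inside $G-v$ and is therefore not all-blue either; so $G$ has no blue $C_3$. For the red matching: the edges incident to $v$ form a star, so any matching contains at most one of them, whence the maximum red matching in $G$ exceeds that of $G-v$ by at most one. As the colouring of $G-v$ has no red $(n-1)$-matching, its maximum red matching has at most $n-2$ edges, and so the maximum red matching of $G$ has at most $n-1<n$ edges. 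This yields the desired contradiction and forces $\Delta(G)\le 3$.

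I expect the only delicate points to be bookkeeping rather than genuine obstacles: one must confirm that $G-v$ is legitimately a \emph{proper connected} subgraph (handled by $2$-connectedness) and that the edge count lands precisely at the threshold $4(n-1)-2$, which is why the bound $\deg(v)\ge 4$ is exactly what is needed and no better argument is required for $\deg(v)=3$. It is worth emphasising that red triangles created at $v$ are harmless, since the colouring only needs to avoid a \emph{blue} $C_3$; this is the reason the red star extension is safe. The case $n=1$ causes no trouble because $\mathcal{G}$ is empty then, so in the contradiction argument one always has $n-1\ge 1$ and the target colouring of $G-v$ exists (vacuously when $G-v$ has no red $(n-1)$-matching).
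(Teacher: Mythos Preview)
Your proof is correct and follows essentially the same approach as the paper: delete a vertex of degree at least four, use $2$-connectedness to keep the remainder connected, invoke minimality with $k=n-1$ to obtain an $((n-1)K_2,C_3)$-colouring, and extend by colouring the star at $v$ red. You supply more detail in verifying the extension (the blue-triangle and red-matching checks), but the argument is identical in substance.
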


    \begin{proof}
	For any vertex $v$ of $G$, by \cref{clm:2-connected}, $G-v$ is still connected. If $d(v)\ge 4$, $G-v$ has at most $4(n-1)-2$ edges and hence an $((n-1)K_2,C_3)$-colouring by the choice of $G$. It can be extended to an $(nK_2,C_3)$-colouring of $G$ by colouring all edges incident to $v$ red, a contradiction. Thus, the maximum degree of $G$ is at most three.
    \end{proof}

    \begin{claim}\label{clm:3-regular}
	The graph $G$ is 3-regular.
\end{claim}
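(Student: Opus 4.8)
The plan is to argue by contradiction: assuming $G$ has a vertex $v$ of degree two, I will produce an $(nK_2,C_3)$-colouring of $G$, contradicting $G\to(nK_2,C_3)$. By \cref{clm:minimumdegree} and \cref{clm:maximumdegree} every vertex has degree two or three, so it suffices to rule out degree two. Write $u,w$ for the two neighbours of $v$. Since $G$ is $2$-connected by \cref{clm:2-connected}, $G-v$ is a connected proper subgraph with at most $4n-4\le 4n-2$ edges, so the minimality of $G$ (the inductive hypothesis with $k=n$) yields an $(nK_2,C_3)$-colouring $c$ of $G-v$; that is, its red edges have matching number at most $n-1$ and its blue edges are triangle-free. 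The first step is the \emph{generic extension}: colour both $uv$ and $vw$ blue. This adds no red edge, so the red matching number stays at most $n-1$, and the only blue triangle it could create is $uvw$, which requires $uw$ to be a blue edge. Hence if $uw\notin E(G)$, or if $uw$ is coloured red by $c$, I already obtain an $(nK_2,C_3)$-colouring of $G$ and am done.

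The remaining, genuinely delicate, case is when $uw\in E(G)$ is blue under $c$, so that $uvw$ is a triangle. First I would show that $u$—and by symmetry $w$—has degree three. If instead $\deg u=2$, then both $u$ and $v$ have all their neighbours inside $\{u,v,w\}$, so deleting $w$ separates $\{u,v\}$ from the rest of $G$; this makes $w$ a cut vertex unless $G$ is the single triangle $C_3$, contradicting $2$-connectivity (and one checks directly that $C_3\notin\mathcal G$). Let $u'$ be the third neighbour of $u$. Now, rather than excising the whole triangle, I would delete the pair $\{u,v\}$: since $G-u$ is connected and $v$ becomes a leaf in $G-u$, the graph $G-\{u,v\}$ is still connected, and it loses exactly the four edges $uv,uw,uu',vw$, leaving at most $4n-6=4(n-1)-2$ edges. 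The inductive hypothesis with $k=n-1$ then gives an $((n-1)K_2,C_3)$-colouring $c'$ of $G-\{u,v\}$, whose red matching number is at most $n-2$. I would extend $c'$ to $G$ by colouring $uw$ red and $uv,vw,uu'$ blue: the single new red edge raises the red matching number to at most $n-1$, while the new blue edges sit only at $u$ and $v$, where no blue triangle can close (the triangle $uvw$ now carries the red edge $uw$, and $v$ has no neighbour besides $u,w$). This again contradicts $G\to(nK_2,C_3)$.

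The hard part is precisely this triangle case, and within it the joint bookkeeping of edge count and connectivity. The naive reduction—deleting all three vertices $u,v,w$ to pass from $n$ to $n-1$—can disconnect $G$, after which the inductive hypothesis (stated only for connected subgraphs) is unavailable. The key maneuver that sidesteps this is to delete $\{u,v\}$ instead of $\{u,v,w\}$: removing the degree-three vertex $u$ leaves $v$ as a leaf, so connectivity is automatic, and excising a degree-three vertex together with $v$ is exactly what removes four edges and lowers the budget from $4n-2$ to $4(n-1)-2$. This is why establishing $\deg u=3$ via the cut-vertex argument is essential: a degree-two $u$ would delete only three edges and leave the budget one edge short. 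Once the degree-two vertex is excluded, \cref{clm:minimumdegree} and \cref{clm:maximumdegree} force every vertex to have degree exactly three, so $G$ is $3$-regular.
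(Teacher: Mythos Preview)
Your proof is correct, and its decisive step coincides with the paper's: both remove a degree-$2$ vertex together with an adjacent degree-$3$ vertex, note that the result is connected with at most $4(n-1)-2$ edges, apply the hypothesis with $k=n-1$, and extend the colouring. The paper reaches this reduction more directly, however. It first disposes of the $2$-regular case separately (then $G$ is a cycle, handled by an explicit colouring), and since a connected graph with vertices of both degrees must contain adjacent $v_1,v_2$ with $\deg v_1=2$ and $\deg v_2=3$, it picks such a pair from the outset---no preliminary ``generic extension'' with $k=n$ and no triangle case analysis are needed. The paper's extension is also simpler: colour all three edges at $v_2$ red and the one remaining edge at $v_1$ blue, which makes the blue-triangle check immediate. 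Your route works, but the detour through the generic extension and your more delicate extension (one red edge, three blue, requiring a separate verification that $uu'$ closes no blue triangle) are avoidable. One small imprecision in your write-up: the new blue edges $vw$ and $uu'$ do touch $w$ and $u'$, not only $u$ and $v$; the conclusion still holds because every triangle through them contains either the red edge $uw$ or a non-edge at $v$.
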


\begin{proof}
	By \cref{clm:minimumdegree} and \cref{clm:maximumdegree}, $2\le d(v)\le 3$ for any vertex $v$ of $G$. If $G$ is 2-regular, by \cref{clm:2-connected}, $G$ is a cycle. If $G$ is a triangle, then $n\ge 2$. We colour all edges of $G$ red, which is a $(nK_2,C_3)$-colouring. If $G$ is not a triangle, then we colour all edges of $G$ blue, which is a $(K_2,C_3)$-colouring. Thus, $G$ cannot be 2-regular.
	
	If $G$ is not 3-regular, then $G$ has some vertices with degree two and some with degree three. There exist two adjacent vertices with degrees two and three respectively, denoted by $v_1$ and $v_2$. Since $G-v_2$ is connected, and $v_1$ has only one neighbour in $G-v_2$, it follows that $G-\{v_1,v_2\}$ is connected. This graph has at most $4(n-1)-2$ edges and hence an $((n-1)K_2,C_3)$-colouring. It can be extended to an $(nK_2,C_3)$-colouring of $G$ by colouring all edges incident to $v_2$ red and the remaining edge incident to $v_1$ blue, a contradiction which implies our claim. 
    \end{proof}

	\begin{claim}\label{clm:triangle}
	Each edge of $G$ is contained in at least one triangle.
    \end{claim}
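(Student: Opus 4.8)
The plan is to argue by contradiction: suppose some edge $e=v_1v_2$ of $G$ lies in no triangle, and construct an $(nK_2,C_3)$-colouring of $G$, contradicting $G\to(nK_2,C_3)$. The whole argument reduces to deleting $e$, colouring the smaller graph by minimality, and then recolouring $e$ in a way that is safe precisely because $e$ is triangle-free.

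First I would pass to $G-e$. Since $G$ is $2$-connected by \cref{clm:2-connected}, deleting a single edge keeps it connected, so $G-e$ is a proper connected subgraph of $G$ with $|E(G-e)|=|E(G)|-1\le 4n-3\le 4n-2$ edges. Applying the minimality of $G$ with $k=n$ (every proper connected subgraph with at most $4k-2$ edges fails to arrow $(kK_2,C_3)$), we obtain $G-e\not\to(nK_2,C_3)$, so I may fix an $(nK_2,C_3)$-colouring of $G-e$; in particular its red subgraph contains no matching of size $n$, hence a maximum red matching has at most $n-1$ edges.

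Next I would extend this colouring to all of $G$ simply by colouring $e$ blue. The red subgraph is then unchanged, so its maximum matching still has at most $n-1$ edges and no red $nK_2$ appears. For the blue side, any blue triangle of $G$ either avoids $e$, in which case it already lives in $G-e$ and contradicts the chosen colouring, or it uses $e$, which is impossible because $e$ lies in no triangle of $G$. Thus the extended colouring is an $(nK_2,C_3)$-colouring of $G$, the desired contradiction, and so every edge must lie in at least one triangle.

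The only point requiring care is the applicability of minimality to $G-e$: it has the same order as $G$ but one fewer edge, so it is ruled out of $\mathcal G$ by the \emph{minimum size subject to order} clause rather than by order minimisation. The rest is immediate, and the entire construction hinges on the triangle-free assumption on $e$, which is exactly what makes recolouring $e$ blue harmless; no computation beyond the trivial edge count is needed. Note that this claim uses only $2$-connectivity, not the degree information established in the earlier claims.
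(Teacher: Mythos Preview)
Your proof is correct and follows essentially the same route as the paper's: delete $e$, use minimality to get an $(nK_2,C_3)$-colouring of $G-e$, then colour $e$ blue. The only cosmetic difference is that the paper cites \cref{clm:nocutedge} (no cut edge) rather than \cref{clm:2-connected} to ensure $G-e$ is connected; either works.
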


    \begin{proof}
	Suppose that $G$ has an edge $e$ which is not contained in any triangle. By \cref{clm:nocutedge}, $G-e$ is connected. It follows from the choice of $G$ that $G-e$ has an $(nK_2,C_3)$-colouring. It can be extended to an $(nK_2,C_3)$-colouring of $G$ by colouring $e$ blue, a contradiction.
    \end{proof}

Consider a triangle $v_1v_2v_3$. By \cref{clm:3-regular}, each of $v_1,v_2,v_3$ has another neighbour, denoted by $v_4,v_5,v_6$ respectively. If $v_4,v_5,v_6$ are the same vertex, then $v_1,v_2,v_3,v_4$ forms a $K_4$. Since $G$ is a 3-regular 2-connected graph, the whole graph $G$ is a $K_4$ and $n\ge 2$. We colour the triangle $v_1v_2v_3$ red, and the other three edges blue. This is a $(2K_2,C_3)$-colouring of $G$, a contradiction. Thus, at least two of $v_4,v_5,v_6$ are distinct, say, $v_4$ and $v_5$ are two distinct vertices. The vertex $v_3$ cannot be adjacent to both $v_4$ and $v_5$, since otherwise $d(v_3)\ge 4$. Without loss of generality, assume that $v_3$ is not adjacent to $v_4$. Moreover, $v_4$ is not adjacent to $v_2$, since otherwise $d(v_2)\ge 4$. By \cref{clm:triangle}, $v_1v_4$ is contained in a triangle, denoted by $v_1v_4v_6$. Since $v_6$ is different with $v_2,v_3$, we have $d(v_1)\ge 4$, a final contradiction.

\end{document}